\documentclass[11pt]{article}

\usepackage[T1]{fontenc}
\usepackage{lmodern}
\usepackage{amsmath,amssymb,amsthm}
\usepackage[margin=1in]{geometry}
\usepackage[hidelinks]{hyperref}

\hypersetup{
  pdftitle={A counterexample to the Foregger--Sinkhorn tie-point conjecture},
  pdfauthor={Yair Lavi}
}

\newtheorem{theorem}{Theorem}
\newtheorem{lemma}[theorem]{Lemma}
\theoremstyle{definition}
\newtheorem*{conjecture}{Tie-point conjecture}

\title{A counterexample to the Foregger--Sinkhorn tie-point conjecture}
\author{Yair Lavi}
\date{13 August 2026}

\begin{document}
\maketitle

\begin{abstract}

The Foregger--Sinkhorn tie-point conjecture asserts that if a nearly
decomposable doubly stochastic matrix minimizes the permanent on a face and
the permanental cofactor at a prescribed zero is larger than its permanent,
then that zero is a tie point. We give a counterexample in dimension eight.
\end{abstract}

\noindent\textbf{Keywords.} Permanent; doubly stochastic matrix; nearly
decomposable matrix; tie point; extremal matrix problem.

\noindent\textbf{2020 Mathematics Subject Classification.} 15A15, 15B51.

\section{Introduction}

Let \(\Omega_n\) denote the polytope of \(n\)-by-\(n\) doubly stochastic
matrices. For a set \(Z\) of positions, put

\[
\Omega_n(Z)=\{A=(a_{ij})\in\Omega_n:a_{ij}=0
\text{ for every }(i,j)\in Z\}.
\]

A square nonnegative matrix is \textbf{partly decomposable} if it contains an
\(s\)-by-\((n-s)\) zero submatrix for some \(1\le s<n\), and is \textbf{fully
indecomposable} otherwise. It is \textbf{nearly decomposable} if it is fully
indecomposable but replacing any one of its positive entries by zero makes it
partly decomposable.

Let \(S\) be a nearly decomposable support and let \(g\notin S\) be a zero
position. Hartfiel \cite[p.~224]{hartfiel1971} called \(g\) a \textbf{tie point} if
\((S\cup\{g\})\setminus\{e\}\) is partly decomposable for every original
support position \(e\in S\). Equivalently, \(g\) fails to be a tie point as
soon as \((S\cup\{g\})\setminus\{e\}\) is fully indecomposable for some
\(e\in S\).

For a matrix \(A\), write \(A(i\mid j)\) for the submatrix obtained by
deleting row \(i\) and column \(j\).

Minc states the Foregger--Sinkhorn conjecture as Conjecture 41 in his 1987
survey \cite[p.~135]{minc1987}; Cheon and Wanless retain the same numbering
in their later catalogue \cite[p.~333]{cheonwanless2005}. Its statement is
the following.

\begin{conjecture}
Let \(A\) be a nearly decomposable matrix that minimizes the permanent on
\(\Omega_n(Z)\), and let \((i,j)\in Z\). If
\(\operatorname{per}A(i\mid j)>\operatorname{per}A\), then \((i,j)\) is a tie
point for \(A\).
\end{conjecture}

As Minc records \cite[p.~135]{minc1987}, Foregger and Sinkhorn
\cite{foreggersinkhorn1986} proposed this
implication after giving counterexamples to an earlier conjecture of his.
Cheon and Wanless \cite[p.~333]{cheonwanless2005} report that Foregger
\cite{foregger1987} subsequently verified the tie-point implication for a
special family they call complexes, described as two special vertices joined
by separate paths. The source terminology is not uniform: the catalogue says
``complex,'' whereas the title of \cite{foregger1987} says ``multiplexes.'' We
follow the catalogue terminology and give a counterexample outside its
described family.

The exact order-eight statement appears in Theorem 1. Its proof occupies
Sections 2--5. The result is not a minimality theorem: the absence of smaller
counterexamples has not been proved.

\subsection{The counterexample}

For an \(n\)-by-\(n\) zero-one matrix \(D\), put
\(Z(D)=[n]^2\setminus\operatorname{supp}D\) and
\(\Omega(D)=\Omega_n(Z(D))\). Thus \(\Omega(D)\) is the closed face of doubly
stochastic matrices whose support is contained in \(D\). Consider the support

\[
D_8=\begin{pmatrix}
1&1&1&1&0&0&0&0\\
1&0&0&0&0&0&1&0\\
0&1&0&0&0&0&1&0\\
0&0&0&0&1&1&1&0\\
0&0&1&0&0&0&0&1\\
0&0&0&1&0&0&0&1\\
0&0&0&0&1&0&0&1\\
0&0&0&0&0&1&0&1
\end{pmatrix}.
\]

Its row strings are

\begin{verbatim}
11110000|10000010|01000010|00001110|
00100001|00010001|00001001|00000101.
\end{verbatim}

\begin{theorem}
The cubic
\(7x^3-13x^2+12x-4\) has a unique root \(\beta\) satisfying
\(59/100<\beta<3/5\). Let \(Z_8=Z(D_8)\).

The support \(D_8\) is nearly decomposable, the matrix below has support
exactly \(D_8\), and the permanent on the entire closed face
\(\Omega(D_8)=\Omega_8(Z_8)\) has the unique global minimizer

\[
A_\beta=\begin{pmatrix}
\beta/2&\beta/2&(1-\beta)/2&(1-\beta)/2&0&0&0&0\\
1-\beta/2&0&0&0&0&0&\beta/2&0\\
0&1-\beta/2&0&0&0&0&\beta/2&0\\
0&0&0&0&\beta/2&\beta/2&1-\beta&0\\
0&0&(1+\beta)/2&0&0&0&0&(1-\beta)/2\\
0&0&0&(1+\beta)/2&0&0&0&(1-\beta)/2\\
0&0&0&0&1-\beta/2&0&0&\beta/2\\
0&0&0&0&0&1-\beta/2&0&\beta/2
\end{pmatrix}.
\]

Its permanent is

\[
m_8=\operatorname{per}A_\beta
=\frac{13233\beta^2-10268\beta+2536}{67228}
\approx0.01628247758894465.
\]

At the prescribed zero \(g=(1,5)\in Z_8\),

\[
\operatorname{per}A_\beta(1\mid5)-\operatorname{per}A_\beta
=\frac{286+1599\beta-3232\beta^2}{9604}
>\frac{2047}{240100}>0.
\]

But \(g\) is not a tie point. Consequently the Foregger--Sinkhorn tie-point
conjecture is false.
\end{theorem}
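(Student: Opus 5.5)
The proof breaks into four parts: the easy combinatorial and algebraic checks, the identification of the global minimizer, the cofactor inequality, and the failure of the tie-point property.

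\textbf{Routine checks.} For the root \(\beta\), set \(p(x)=7x^3-13x^2+12x-4\). Its derivative is \(p'(x)=21x^2-26x+12\), whose discriminant is \(676-1008<0\), so \(p\) is strictly increasing and has exactly one real root. Evaluating \(p(59/100)<0<p(3/5)\) locates it. Next, every row and column sum of \(A_\beta\) is \(1\), and for \(\beta\in(0,1)\) every entry on \(D_8\) is positive, so \(A_\beta\in\Omega(D_8)\) with support exactly \(D_8\). For near decomposability, I would use the bipartite graph of \(D_8\):
\begin{itemize}
\item \emph{Full indecomposability:} the graph is connected and every edge lies in a perfect matching. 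Equivalently, there is no \(s\times(8-s)\) zero block, which I would verify by a Hall-type argument.
\item \emph{Minimality:} for each of the \(22\) ones, I would exhibit a zero block created by deleting it. Typically a row or column with two ones becomes forced, and this propagates to an explicit block.
\end{itemize}
For the non-tie-point claim, I only need one \(e\in D_8\) such that \(D_8\cup\{(1,5)\}\setminus\{e\}\) is fully indecomposable. A natural candidate is \(e=(1,1)\) or \((4,7)\), and I would certify it by listing a perfect matching through each entry and checking that there is no zero block.

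\textbf{Minimality.} I would parametrize the face \(\Omega(D_8)\). The minimizer should inherit the visible symmetry that swaps rows \(2\leftrightarrow3\), \(5\leftrightarrow6\), \(7\leftrightarrow8\) and columns \(1\leftrightarrow2\), \(3\leftrightarrow4\), \(5\leftrightarrow6\). For uniqueness, I would not rely on symmetry alone. Instead I would use the standard structure theory for minimizers on a face, in the style of Foregger and Sinkhorn and of Minc: at an interior minimizer with support \(D\) one has \(\operatorname{per}A(i\mid j)=\operatorname{per}A\) for every \((i,j)\in D\). This is the Lagrange condition, since the face is a polytope and the minimizer lies in its relative interior.

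The plan has three steps:
\begin{enumerate}
\item Show that the minimum is not attained on the relative boundary. Boundary points have support a proper subpattern of \(D_8\), hence partly decomposable. Such matrices either have permanent computed from the components, giving a value larger than \(m_8\) by explicit lower bounds such as van der Waerden on the blocks, or they lie on faces of small dimension where the minimum can be computed directly.
\item Solve the Lagrange system \(\operatorname{per}A(i\mid j)=\operatorname{per}A\) on \(D_8\) in the interior. The face has dimension \(22-15=7\). Using the \(7\) free parameters, I would show the system forces the symmetric form and reduces to the cubic \(p(\beta)=0\).
\item Compare the finitely many critical values.
\end{enumerate}
A cleaner alternative, if it works, is to write \(\operatorname{per}\) as a polynomial in the seven parameters and show directly that it exceeds \(m_8\) off \(A_\beta\). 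Either way, step 1 together with uniqueness of the interior critical point is the \textbf{main obstacle}. It requires a case analysis over the maximal partly decomposable subpatterns of \(D_8\), each of which gives a lower-dimensional permanent minimization.

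\textbf{Cofactor inequality.} I would expand \(\operatorname{per}A_\beta\) and \(\operatorname{per}A_\beta(1\mid5)\) as polynomials in \(\beta\) by enumerating permutations; \(D_8\) has few perfect matchings, so this is manageable. I would then reduce modulo \(p(\beta)\) to obtain the stated quadratic expressions. The bound \(286+1599\beta-3232\beta^2>\tfrac{2047}{25}\) on \((59/100,3/5)\) follows from concavity by checking the two endpoints.
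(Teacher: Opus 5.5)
\textbf{Main gap: global minimality is not proved.} The central claim is that $A_\beta$ is the \emph{unique global} minimizer on the closed face. Your proposal defers this rather than proving it, and the tools you name would not deliver it.

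\emph{Interior.} The condition $\operatorname{per}A(i\mid j)=\operatorname{per}A$ on $D_8$ is only necessary. Nothing in your plan shows that this polynomial system has exactly one solution in the relative interior, and ``compare the finitely many critical values'' presupposes exactly that.

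\emph{Boundary.} This step fails quantitatively. Take a codimension-one stratum such as $a=0$, where $(2,1)$ becomes a $1\times1$ block and a $7\times7$ block remains. Van der Waerden gives only $7!/7^7\approx0.0061<m_8\approx0.0163$ there. So each stratum needs its own exact minimization, which is essentially the original problem again.

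\emph{Count.} $D_8$ has $19$ ones, not $22$, so the face is $4$-dimensional, not $7$-dimensional.

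\emph{The missing structural idea.} Write the face with $a+b=c+d=t$ and $z+h=1-t$. The bipartite graph is then three bundles of two length-three paths joining the hubs $R_1,C_7,R_4,C_8$, plus the edge $R_4C_7$. A perfect matching therefore has only two global states, and
$\operatorname{per}A=F_t(u)F_t(v)G_{1-t}(w)+(1-t)G_t(u)G_t(v)F_{1-t}(w)$, with $u=ab$, $v=cd$, $w=zh$.
\begin{itemize}
\item This expression is affine in each of $u,v,w$. So for fixed $t$ it is a convex combination of its values at the eight corners of the box $[0,t^2/4]^2\times[0,(1-t)^2/4]$, which treats interior and boundary at once.
\item Seven corners exceed $1/50$ by one-variable calculus.
\item The all-equal corner $q_8(t)$ has derivative containing the factor $7t^3-13t^2+12t-4$. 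Hence its unique minimum is at $\beta$, with $q_8(\beta)<1/50$.
\item Equality forces $a=b=c=d=\beta/2$ and $z=h=(1-\beta)/2$.
\end{itemize}
Without this or an equivalent global argument, the minimizing hypothesis of the conjecture is not established.

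\textbf{Second gap: your tie-point witnesses fail.} Work in $D_8\cup\{(1,5)\}$.
\begin{itemize}
\item Deleting $(1,1)$ leaves column $1$ with the single entry $(2,1)$.
\item Deleting $(4,7)$ leaves columns $1,2,7$ meeting only rows $1,2,3$, a $5\times3$ zero block.
\end{itemize}
Both results are partly decomposable. More generally, every edge other than $(4,7)$ lies on one of the six length-three paths. Each such edge is incident with an internal row or column having exactly two entries. Deleting it leaves that line with a single entry, unless the line is column $5$, the only one enlarged by adding $(1,5)$. Of the two path edges at column $5$, deleting $(7,5)$ still strands row $7$. So $e=(4,5)$ is the only possible witness, and it works. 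The resulting support is connected and has the perfect matchings $17263458$, $21763458$, $31278456$, $41273856$, $51273486$, which together cover all its entries.

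The routine parts of your proposal are fine: isolating $\beta$, the line sums, expanding the two permanents and reducing modulo the cubic, and the concavity bound for the cofactor gap.
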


\section{The support is nearly decomposable}

We use the standard bipartite graph of a zero-one support: row vertices and
column vertices are joined at the support positions. A square support is fully
indecomposable precisely when this graph is connected and every edge lies in
a perfect matching. The second condition is usually called total support;
Hartfiel records the relevant total-support facts and single-edge deletion
lemma \cite[pp.~223--224]{hartfiel1971}.

Write \(ij\) for the edge joining row \(i\) to column \(j\). The complete
list of perfect matchings of \(D_8\), written as the selected column in rows
\(1,\ldots,8\), is

\[
17253486,\quad17263458,\quad21753486,\quad21763458,
\quad31278456,\quad41273856.                 \tag{2.1}
\]

This list is exhaustive by splitting on the four possible choices in row 1;
the first two choices give two completions each, and the last two give one
completion each. The union of the six matchings is all 19 edges of \(D_8\).
Moreover,

\[
11,12,13,14,21,32,53,64,27,58,47,78,88,45,46              \tag{2.2}
\]

is a spanning tree of the 16-vertex bipartite graph. Thus \(D_8\) is
connected and has total support, so it is fully indecomposable.

It remains to show that every edge is essential. In the following table, the
right-hand edge \(f\) has the property that every perfect matching containing
\(f\) also contains the edge \(e\) in the left column.

\begin{center}
\begin{tabular}{cc}
\hline
deleted edge \(e\) & edge \(f\) left without a perfect matching \\
\hline
11 & 27 \\
12 & 37 \\
13 & 58 \\
14 & 68 \\
21 & 12 \\
27 & 11 \\
32 & 11 \\
37 & 12 \\
45 & 78 \\
46 & 88 \\
47 & 13 \\
53 & 11 \\
58 & 13 \\
64 & 11 \\
68 & 14 \\
75 & 13 \\
78 & 45 \\
86 & 13 \\
88 & 46 \\
\hline
\end{tabular}
\end{center}

The assertion in every row is checked directly from the exhaustive list
(2.1). After deleting \(e\), the distinct edge \(f\) remains but lies in no
perfect matching. Hence the resulting support does not have total support and,
by Hartfiel's lemma \cite[p.~224]{hartfiel1971}, is partly decomposable. This
proves that \(D_8\) is nearly decomposable.

The incidence pattern also separates this support from the quoted complex
case. Row 1 and column 8 are each incident with four support edges, while row
4 and column 7 are each incident with three; every other vertex is incident
with two. Thus \(D_8\) has four vertices where more than two support edges
meet, rather than the two special vertices in the catalogue description of a
complex.

\section{The complete face and its invariant permanent}

The global argument must include every boundary stratum of the closed face.
Let \(a,b,c,d,z,h\ge0\) satisfy

\[
a+b=c+d=:t,\qquad z+h=1-t,\qquad 0\le t\le1.             \tag{3.1}
\]

These constraints also ensure that every complementary entry displayed below
is nonnegative.

Then every matrix in \(\Omega(D_8)\), and no other matrix, has the unique form

\[
A(a,b,c,d,z,h)=
\begin{pmatrix}
a&b&z&h&0&0&0&0\\
1-a&0&0&0&0&0&a&0\\
0&1-b&0&0&0&0&b&0\\
0&0&0&0&c&d&1-t&0\\
0&0&1-z&0&0&0&0&z\\
0&0&0&1-h&0&0&0&h\\
0&0&0&0&1-c&0&0&c\\
0&0&0&0&0&1-d&0&d
\end{pmatrix}.                                           \tag{3.2}
\]

To verify this claim directly, start with the entries \(a,b,z,h\) in row 1
and \(c,d\) in row 4. The sum equations for column 1 and row 2 force the
remaining weights \(1-a,a\); those for column 2 and row 3 force \(1-b,b\).
Likewise, columns 3, 4, 5, 6 together with rows 5, 6, 7, 8 force, respectively,
the pairs

\[
(1-z,z),\qquad(1-h,h),\qquad(1-c,c),\qquad(1-d,d).
\]

If \(x\) denotes the remaining entry in row 4 and column 7, the four remaining
row- and column-sum equations, for row 1, column 7, row 4, and column 8, are

\[
a+b+z+h=1,\qquad a+b+x=1,
\qquad c+d+x=1,\qquad c+d+z+h=1.
\]

They give

\[
a+b=c+d=:t,\qquad z+h=1-t,\qquad x=1-t.
\]

This proves that (3.1)--(3.2) parameterize the entire closed face, including
every boundary stratum.

Regard (3.2) as a weighted bipartite graph with vertices \(R_i,C_j\), and give
the edge \(R_iC_j\) the corresponding matrix-entry weight. The permanent is
the sum of the weights of all perfect matchings. Apart from the edge
\(R_4C_7\), the graph consists of the following six paths of length three:

\[
\begin{array}{c|c}
\text{path}&\text{edge weights}\\ \hline
R_1-C_1-R_2-C_7&a,\ 1-a,\ a\\
R_1-C_2-R_3-C_7&b,\ 1-b,\ b\\
R_4-C_5-R_7-C_8&c,\ 1-c,\ c\\
R_4-C_6-R_8-C_8&d,\ 1-d,\ d\\
R_1-C_3-R_5-C_8&z,\ 1-z,\ z\\
R_1-C_4-R_6-C_8&h,\ 1-h,\ h.
\end{array}
\]

Call each pair of these paths having the same endpoints a \textbf{bundle}. Thus
there are three bundles, with path labels \((a,b)\), \((c,d)\), and \((z,h)\).
The six paths meet at \(R_1,C_7,R_4,C_8\); we call these four vertices the
\textbf{hubs}.

On any one of the listed paths, the row- and column-sum equations make the
two endpoint-edge weights equal. We call their common value \(r\) the
\textbf{path parameter}. The three edge weights along that path are then
\(r,1-r,r\).

Fix a perfect matching of the full graph and consider its restriction to one
bundle. All four internal vertices of the bundle must be covered exactly once.
For example, the \((a,b)\)-bundle joins \(R_1\) to \(C_7\), and its internal
vertices are \(C_1,R_2,C_2,R_3\). If the matching uses neither hub through
this bundle, it must select the two middle edges \(C_1R_2\) and \(C_2R_3\),
whose product of weights is \((1-a)(1-b)\). If it uses both hubs through this
bundle, there are exactly two possibilities:

\[
\{R_1C_1,R_2C_7,C_2R_3\}
\quad\text{or}\quad
\{C_1R_2,R_1C_2,R_3C_7\},
\]

with products of weights \(a^2(1-b)\) and \((1-a)b^2\), respectively. A
configuration using exactly one hub is impossible: on either length-three
path, covering its two internal vertices forces either its middle edge alone
or both endpoint edges.

More generally, let a bundle have path parameters \(r,q\), and put
\(s=r+q\) and \(p=rq\). The total contribution of configurations using
neither hub is

\[
G_s(p)=(1-r)(1-q)=1-s+p.
\]

The total contribution of the two configurations using both hubs is

\[
\begin{aligned}
F_s(p)
&=r^2(1-q)+q^2(1-r)\\
&=s^2-(s+2)p.
\end{aligned}                                             \tag{3.3}
\]

Put

\[
u=ab,\qquad v=cd,\qquad w=zh.                             \tag{3.4}
\]

The sums and products of the path parameters in the three bundles are
\((t,u)\), \((t,v)\), and \((1-t,w)\), respectively. Recording only which
hubs are joined, these bundles give the connections \(R_1C_7\), \(R_4C_8\),
and \(R_1C_8\). Together with the single edge \(R_4C_7\), they form the cycle

\[
R_1-C_7-R_4-C_8-R_1.
\]

Consequently a perfect matching has only two global matching states.

\begin{itemize}
\item If \(R_4C_7\) is unused, the \((a,b)\)- and \((c,d)\)-bundles are in
state \(F\), while the \((z,h)\)-bundle is in state \(G\).
\item If \(R_4C_7\) is used, it contributes \(1-t\); the first two bundles
are in state \(G\), and the third is in state \(F\).
\end{itemize}

Summing the contributions of these two disjoint classes of perfect matchings
gives

\[
P_8(t,u,v,w)
=F_t(u)F_t(v)G_{1-t}(w)
 +(1-t)G_t(u)G_t(v)F_{1-t}(w).                           \tag{3.5}
\]

For nonnegative \(r,q\) with \(r+q=s\),

\[
0\le rq\le\frac{s^2}{4},                                \tag{3.6}
\]

because \((r-q)^2\ge0\). Equality at the upper endpoint holds exactly when
\(r=q=s/2\). It follows that

\[
0\le u,v\le\frac{t^2}{4},
\qquad 0\le w\le\frac{(1-t)^2}{4}.
\]

For each fixed \(t\), these inequalities define a three-dimensional box in
\((u,v,w)\)-space, possibly degenerate at \(t=0\) or \(t=1\). Normalize it by

\[
u=U\frac{t^2}{4},\qquad v=V\frac{t^2}{4},\qquad
w=W\frac{(1-t)^2}{4},\qquad 0\le U,V,W\le1.             \tag{3.7}
\]

In these normalized coordinates, the six bundle factors in (3.5) are

\[
\begin{aligned}
F_t(u)&=t^2\left(1-\frac{t+2}{4}U\right),
&G_t(u)&=1-t+\frac{t^2}{4}U,\\
F_t(v)&=t^2\left(1-\frac{t+2}{4}V\right),
&G_t(v)&=1-t+\frac{t^2}{4}V,\\
F_{1-t}(w)&=(1-t)^2\left(1-\frac{3-t}{4}W\right),
&G_{1-t}(w)&=t+\frac{(1-t)^2}{4}W.
\end{aligned}
\]

The formula (3.5) is affine in each of \(u,v,w\) separately. Successive
one-variable interpolation in the three normalized coordinates therefore
expresses every value as a convex combination of the eight corner values.
Specifically, if \(C_{ijk}(t)\) is the value of (3.5) at
\((U,V,W)=(i,j,k)\), then

\[
P_8(t,u,v,w)=\sum_{(i,j,k)\in\{0,1\}^3}
C_{ijk}(t)
U^i(1-U)^{1-i}V^j(1-V)^{1-j}W^k(1-W)^{1-k}.             \tag{3.8}
\]

All eight weights are nonnegative, and their sum is
\((U+(1-U))(V+(1-V))(W+(1-W))=1\). This conclusion is a
special consequence of the multiaffine identity (3.8), not a claim that the
permanent is a convex function. A corner coordinate \(0\) means that the
corresponding product is zero; when the prescribed sum is positive, one path
parameter is zero and the other equals that sum. A coordinate \(1\) means
that the product is maximal, so the two path parameters are equal. At \(t=0\)
or \(t=1\), normalized coordinates on collapsed intervals may be assigned
arbitrarily; equivalently, (3.8) at those endpoints follows by continuity.

\section{Unique global minimum on the closed face}

We first bound the seven corners other than \(C_{111}\), and then treat
\(C_{111}\) separately.

\begin{lemma}
For every \(t\in[0,1]\), \(C_{ijk}(t)>1/50\) whenever
\((i,j,k)\ne(1,1,1)\).
\end{lemma}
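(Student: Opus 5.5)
The proof will be a direct computation of the eight corner polynomials from the normalized bundle factors listed before (3.8), followed by an elementary lower bound on \([0,1]\) for each of the seven corners other than \(C_{111}\). Write
\[
\alpha_0=1,\quad \alpha_1=\tfrac{2-t}{4},\qquad g_0=1-t,\quad g_1=\bigl(1-\tfrac t2\bigr)^2,
\]
so that \(F_t=t^2\alpha_i\) and \(G_t=g_i\) at \(U=i\) (and likewise for \(V\)). Similarly, put
\[
\gamma_0=t,\quad \gamma_1=\tfrac{(1+t)^2}{4},\qquad \delta_0=1,\quad \delta_1=\tfrac{1+t}{4},
\]
so that \(G_{1-t}=\gamma_k\) and \(F_{1-t}=(1-t)^2\delta_k\) at \(W=k\). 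Then (3.5) gives the explicit product form
\[
C_{ijk}(t)=t^4\alpha_i\alpha_j\gamma_k+(1-t)^3 g_i g_j\delta_k .
\]
This is symmetric in \(i,j\), so only six distinct polynomials remain to be treated once \((1,1,1)\) is excluded: \((0,0,k)\), \((1,0,k)\) for \(k=0,1\), and \((1,1,0)\).

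Both summands are nonnegative on \([0,1]\), and every factor is monotone in \(t\). The factors \(t^4\) and \(\gamma_k\) are nondecreasing, while \((1-t)^3\), \(\alpha_i\), \(g_i\) are nonincreasing and \(\delta_k\) is nondecreasing. The plan is to cover \([0,1]\) by finitely many subintervals \([t_0,t_1]\). On each one, every factor is bounded below by its value at the appropriate endpoint, which yields a rational lower bound for \(C_{ijk}\) on the whole subinterval; it then suffices to check that this bound exceeds \(1/50\).

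The endpoints are easy. At \(t=0\) every corner equals \(\delta_k\ge 1/4\), and at \(t=1\) it equals \(\alpha_i\alpha_j\gamma_k\ge(1/4)^2\cdot1=1/16\), so both exceed \(1/50\). Near the endpoints the dominant term alone suffices. In the middle, the corners with \(i=0\) or \(j=0\) have a factor \(\alpha_0=1\) or \(g_0\) that is much larger than its counterpart at index \(1\), so a coarse partition (say steps of \(1/10\)) should already work for them. As a cross-check, one can expand each \(C_{ijk}(t)-1/50\) as a polynomial of degree at most \(7\) and confirm its positivity either by Bernstein coefficients on subintervals or by a Sturm sequence.

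The main obstacle will be the corner \((1,1,0)\), which is closest in shape to \(C_{111}\). Its value is
\[
t^5\tfrac{(2-t)^2}{16}+(1-t)^3\bigl(1-\tfrac t2\bigr)^4 ,
\]
and both terms are small near \(t\approx 0.6\), where the true minimum \(m_8\approx0.0163\) is attained by \(C_{111}\). There the margin over \(1/50\) is presumably modest, so I would first locate its minimum numerically. I would then certify the bound either with a fine partition (steps of \(1/100\) near the minimum) or by showing that the polynomial \(C_{110}(t)-1/50\) has no root in \([0,1]\) via its Sturm sequence.
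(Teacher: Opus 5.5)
Your route is sound but genuinely different from the paper's.

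\textbf{What you do.} Your uniform formula \(C_{ijk}(t)=t^4\alpha_i\alpha_j\gamma_k+(1-t)^3g_ig_j\delta_k\) is correct, and your monotonicity claims for the factors all check out. Bounding each nonnegative factor at the appropriate endpoint of a subinterval is therefore a valid certification scheme. Because every corner other than \(C_{111}\) in fact stays above \(1/50\) by a positive margin, refining the partition must eventually succeed.

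\textbf{What the paper does.} It works corner by corner and exactly:
\begin{itemize}
\item Jensen's inequality for \(C_{000}\);
\item a split of \(C_{100}\) into an increasing \(X\) and a decreasing \(Y\), checked on \([0,1/2]\), \([1/2,3/5]\), \([3/5,1]\) --- this one is precisely your endpoint method on a three-piece partition;
\item the reflection \(C_{001}(t)=C_{100}(1-t)\);
\item exact factorizations of \(C_{101}'\) and \(C_{110}'\), which isolate the unique critical points \(t=1/2\) and \(t=2/3\) and give the exact minima \(27/1024\) and \(16/729\).
\end{itemize}

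\textbf{Comparison.} The paper's route buys brevity and exact minima, at the price of needing those fortunate factorizations. Yours is mechanical and needs no such luck. As written, though, it is a recipe whose rational checks you have not carried out, and the work is substantial. The hardest corner \(C_{110}\) has margin only \(16/729-1/50=71/36450\). A grid of step \(1/50\) already fails there: on \([0.66,0.68]\) your endpoint bound is about \(0.01986\). So your estimate of a \(1/100\) mesh near \(t=2/3\) is about right, and it means dozens of checks where the paper needs one factorization.

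\textbf{Corrections.}
\begin{itemize}
\item Your claim that steps of \(1/10\) suffice for the corners with a zero index is wrong for \(C_{101}\). On \([1/2,3/5]\), and symmetrically on \([2/5,1/2]\), your endpoint bound is \(63/5120+147/31250\approx0.0170<1/50\). A mesh of \(1/20\) on \([2/5,3/5]\) does work there.
\item Only five distinct corners remain after excluding \(C_{111}\), not six.
\end{itemize}
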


\begin{proof}
Since \(x\mapsto x^5\) is strictly convex on \([0,1]\), Jensen's inequality
gives

\[
\frac{t^5+(1-t)^5}{2}
\ge\left(\frac{t+(1-t)}2\right)^5
=\frac1{32}.
\]

Hence

\[
C_{000}(t)=t^5+(1-t)^5\ge\frac1{16}>\frac1{50},         \tag{4.1}
\]

with equality in the first inequality exactly at \(t=1/2\).

For the corner \(C_{100}=C_{010}\), write

\[
C_{100}(t)=
\underbrace{\frac{t^5(2-t)}4}_{X(t)}
+\underbrace{\frac{(1-t)^4(2-t)^2}{4}}_{Y(t)}.          \tag{4.2}
\]

On \([0,1]\), \(X\) is increasing and \(Y\) is decreasing; indeed,

\[
X'(t)=\frac{t^4(5-3t)}2\ge0,
\qquad
Y'(t)=-\frac{(1-t)^3(2-t)(5-3t)}2\le0.
\]

Both \(X\) and \(Y\) are nonnegative on \([0,1]\). Three elementary estimates
now cover the interval. If \(t\le1/2\), then

\[
Y(t)\ge Y(1/2)=\frac9{256}>\frac1{50}.
\]

If \(t\ge3/5\), then

\[
X(t)\ge X(3/5)=\frac{1701}{62500}>\frac1{50}.
\]

Finally, if \(1/2\le t\le3/5\), then

\[
X(t)\ge\frac3{256}>\frac1{100},
\qquad
Y(t)\ge\frac{196}{15625}>\frac1{100},
\]

and hence \(C_{100}(t)>1/50\). The identity
\(C_{001}(t)=C_{100}(1-t)\) proves the same bound for the remaining asymmetric
corner.

For the two corners \(C_{101}=C_{011}\), exact differentiation gives

\[
\begin{aligned}
C_{101}(t)
&=\frac{(2-t)(1+t)(t^2-t+1)(7t^2-7t+2)}{16},\\
C_{101}'(t)
&=-\frac{(2t-1)(3t^2-3t+2)(7t^2-7t-8)}{16}.
\end{aligned}                                             \tag{4.3}
\]

The quadratic \(3t^2-3t+2\) is positive, while
\(7t^2-7t-8<0\), throughout \([0,1]\). Thus the derivative has the sign of
\(2t-1\), and

\[
C_{101}(t)\ge C_{101}(1/2)=\frac{27}{1024}>\frac1{50}.
\]

The last of these seven corners is

\[
\begin{aligned}
C_{110}(t)
&=\frac{(t-2)^2(7t^4-19t^3+25t^2-16t+4)}{16},\\
C_{110}'(t)
&=\frac{(t-2)(3t-2)(7t-10)(2t^2-3t+2)}{16}.
\end{aligned}                                             \tag{4.4}
\]

On \([0,1]\), the factors \(t-2\) and \(7t-10\) are negative, and
\(2t^2-3t+2\) is positive. The derivative therefore has the sign of
\(3t-2\). Consequently,

\[
C_{110}(t)\ge C_{110}(2/3)=\frac{16}{729}>\frac1{50}.
\]

We have proved

\[
C_{ijk}(t)>\frac1{50}
\qquad\text{for every }(i,j,k)\ne(1,1,1).
\]

This proves the lemma.
\end{proof}

The remaining corner is \(C_{111}\). Here the two path parameters in each
bundle are equal:

\[
a=b=c=d=\frac t2,
\qquad z=h=\frac{1-t}{2}.
\]

Its value is

\[
q_8(t)=C_{111}(t)
=\frac{(t-2)^2(t+1)(8t^4-19t^3+25t^2-16t+4)}{64}.       \tag{4.5}
\]

It crosses below the separating level because

\[
q_8(3/5)-\frac1{50}=-\frac{577}{156250}<0.              \tag{4.6}
\]

Its derivative
factors as

\[
q_8'(t)=\frac{(t-2)(4t^2-3t-4)}{32}
\bigl(7t^3-13t^2+12t-4\bigr).                           \tag{4.7}
\]

On \([0,1]\), both \(t-2\) and \(4t^2-3t-4\) are negative, so the
prefactor in (4.7) is positive. Let

\[
g(t)=7t^3-13t^2+12t-4.
\]

The derivative \(g'(t)=21t^2-26t+12\) is positive on the real line: its
leading coefficient is positive and its discriminant is \(-332\). Moreover,

\[
g(59/100)=-\frac{7647}{1000000}<0,
\qquad g(3/5)=\frac4{125}>0.                             \tag{4.8}
\]

Thus \(g\) has a unique root \(\beta\in(59/100,3/5)\). Since the prefactor
in (4.7) is positive, \(q_8\) strictly decreases up to \(\beta\) and strictly
increases after it. Hence \(q_8(t)\ge q_8(\beta)\) on \([0,1]\), with equality
only at \(t=\beta\).

Let \(m=q_8(\beta)\). By (4.6),

\[
m=q_8(\beta)\le q_8(3/5)<\frac1{50}.
\]

For an arbitrary point of the closed face, the multiaffine identity (3.8)
expresses its permanent as a convex combination of the eight values
\(C_{ijk}(t)\). By Lemma 2, seven of these values are strictly larger than
\(1/50>m\); the remaining value is
\(C_{111}(t)=q_8(t)\ge q_8(\beta)=m\). Therefore every matrix in the face has
permanent at least \(m\).

The equality conditions are equally explicit. Equality requires zero weight
on every one of the seven corners other than \(C_{111}\). Since all eight
weights in (3.8) are nonnegative and sum to one, the weight on \(C_{111}\)
must be one; hence \(U=V=W=1\). Equality also requires \(q_8(t)=m\), so the
uniqueness of the minimum of \(q_8\) gives \(t=\beta\). Finally, maximal
product at a fixed pair sum is attained uniquely by equal numbers. Thus

\[
a=b=c=d=\frac\beta2,
\qquad z=h=\frac{1-\beta}{2}.                            \tag{4.9}
\]

It follows that the displayed matrix \(A_\beta\) is the unique global
minimizer on the entire closed face. Reducing (4.5) modulo \(g(t)\) gives

\[
m=q_8(\beta)
=\frac{13233\beta^2-10268\beta+2536}{67228},            \tag{4.10}
\]

as asserted in Theorem 1.

\section{The prescribed zero is not a tie point}

Let \(g=(1,5)\), a zero position of \(D_8\). Direct permanent expansion of
the \(7\)-by-\(7\) minor gives

\[
\operatorname{per}A_\beta(1\mid5)
=\frac{\beta(\beta-2)^3(\beta-1)(\beta+1)^2}{64}.       \tag{5.1}
\]

Subtracting (4.5)
and reducing modulo \(7\beta^3-13\beta^2+12\beta-4\) yields

\[
\operatorname{per}A_\beta(1\mid5)-\operatorname{per}A_\beta
=H(\beta),
\qquad
H(t)=\frac{286+1599t-3232t^2}{9604}.                    \tag{5.2}
\]

Now \(H'(t)=(1599-6464t)/9604\), and

\[
H'(59/100)=-\frac{55369}{240100}<0.
\]

Since \(H'\) is decreasing, \(H\) is strictly decreasing throughout the
isolating interval. Therefore

\[
H(\beta)>H(3/5)=\frac{2047}{240100}>0.                  \tag{5.3}
\]

It remains to show that \(g\) is not a tie point. Add \((1,5)\) and delete
the original edge \((4,5)\). The surviving support is

\begin{verbatim}
11111000|10000010|01000010|00000110|
00100001|00010001|00001001|00000101.
\end{verbatim}

It has the five perfect matchings

\[
17263458,\quad21763458,\quad31278456,\quad
41273856,\quad51273486.                                 \tag{5.4}
\]

Their union is the entire displayed support. The edges

\[
11,12,13,14,15,21,32,53,64,75,27,58,47,88,46            \tag{5.5}
\]

form a spanning tree, so the support is connected. It consequently has total
support and is fully indecomposable. Thus deletion of the original edge
\((4,5)\) from the support augmented at \(g\) does \textbf{not} make the support
partly decomposable. By Hartfiel's definition, \(g\) is not a tie point.

The matrix \(A_\beta\) is nearly decomposable by Section 2, is the global
minimizer on the prescribed face by Section 4, and satisfies the strict
cofactor inequality by (5.3), while the conjectured conclusion fails. This
completes the proof of Theorem 1.\hfill\qedsymbol

\section*{Acknowledgments}

The proof of Theorem 1 was carried out by GPT-5.6-sol and Claude Fable 5,
under the guidance of the author. The author has reviewed the resulting proof
arguments. Responsibility for the final text rests with the author.


\begin{thebibliography}{9}

\bibitem{hartfiel1971}
D. J. Hartfiel,
\emph{On constructing nearly decomposable matrices},
Proceedings of the American Mathematical Society \textbf{27} (1971), 222--228.
\href{https://doi.org/10.1090/S0002-9939-1971-0268062-4}
{doi:10.1090/S0002-9939-1971-0268062-4}.

\bibitem{foreggersinkhorn1986}
T. H. Foregger and R. Sinkhorn,
\emph{On matrices minimizing the permanent on faces of the polyhedron of the
doubly stochastic matrices},
Linear and Multilinear Algebra \textbf{19} (1986), 395--397.
\href{https://doi.org/10.1080/03081088608817734}
{doi:10.1080/03081088608817734}.

\bibitem{foregger1987}
T. H. Foregger,
\emph{Minimum permanents of multiplexes},
Linear Algebra and its Applications \textbf{87} (1987), 197--211.
\href{https://doi.org/10.1016/0024-3795(87)90167-4}
{doi:10.1016/0024-3795(87)90167-4}.

\bibitem{minc1987}
H. Minc,
\emph{Theory of permanents 1982--1985},
Linear and Multilinear Algebra \textbf{21} (1987), 109--148.
\href{https://doi.org/10.1080/03081088708817786}
{doi:10.1080/03081088708817786}.

\bibitem{cheonwanless2005}
G.-S. Cheon and I. M. Wanless,
\emph{An update on Minc's survey of open problems involving permanents},
Linear Algebra and its Applications \textbf{403} (2005), 314--342.
\href{https://doi.org/10.1016/j.laa.2005.02.030}
{doi:10.1016/j.laa.2005.02.030}.

\end{thebibliography}
\end{document}